%------------------------------------------------------------------------------
% Beginning of journal.tex
%------------------------------------------------------------------------------
%
\documentclass[11pt]{amsart}
\usepackage{amsmath, amsthm, amscd, amsfonts}
\allowdisplaybreaks 
\usepackage{tikz}
\usepackage{pstricks,pst-node}
\usepackage[all]{xy}

\makeatletter \oddsidemargin.9375in \evensidemargin \oddsidemargin
\marginparwidth1.9375in \makeatother

\textwidth 13cm \topmargin.0in \textheight 19cm

\newtheorem{theorem}{Theorem}[section]
\newtheorem{lemma}[theorem]{Lemma}
\newtheorem{proposition}[theorem]{Proposition}
\newtheorem{corollary}[theorem]{Corollary}
\theoremstyle{definition}
 \newtheorem{definition}[theorem]{Definition}

\theoremstyle{remark}

\numberwithin{equation}{section}

\begin{document}

\title[ Operators preserving the representation of a semidirect product group]{Operators preserving the representation of a semidirect product group}

\author[M. Mortazavizadeh, R. Raisi Tousi]{M. Mortazavizadeh, R. Raisi Tousi$^{*}$\\
October 13, 2019}

\subjclass[2010]{Primary  47A15 ; Secondary  42B99, 22B99.}

\keywords{Locally compact abelian group, shift preserving operator,
range operator, $\Gamma$-preserving operator}

\begin{abstract}
For a locally compact abelian group $\textbf{R}$ with a uniform lattice $\Lambda$ and a group $G$ that acts on $\textbf{R}$ by continuous automorphisms, we study operators commuting with the representation of $G \times \Lambda$ on $L^2(\textbf{R})$. As a consequence, we give a characterization of shift- dilation preserving operators.

\end{abstract} \maketitle

\section{Introduction and Preliminaries}

\noindent
Shift invariant subspaces of $L^2(\Bbb{R}^d )$ are closed subspaces of $L^2(\Bbb{R}^d )$ that are invariant under integer shifts. They play an essential role in many area of mathematical analysis and its applications \cite{BDR, B, RS}. Shift invariant subspaces of $L^2(\Bbb{R}^d )$ are introduced by a range function approach in \cite{Hel}. M. Bownik gives a characterization of these spaces in terms of range functions in \cite{B}. He also studied shift preserving operators on $L^2(\Bbb{R}^d )$, bounded linear operators that commute with integer shifts, and characterized them via range operators. The theory of shift invariant spaces is generalized to the setting of locally compact abelian groups in \cite{Cab, KRr, KRs}. For a locally compact abelian group $\textbf{R}$ with a uniform lattice $\Lambda$, a shift invariant subspace is a closed subspace of $L^2(\textbf{R})$ which is invariant under shifts by elements of $\Lambda$. A bounded linear operator $U$ on $L^2(\textbf{R})$ is said to be shift preserving if $U$ commutes with the shift operators by elements of $\Lambda$. The structure of these operators are studied in \cite{KRsh} in which the authors give a characterization of shift preserving operators in terms of range operators. In \cite{BHP}, the structure of subspaces that are invariant under the action of a discrete locally compact abelian group, as a generalization of shift invariant spaces, is investigated. Recently, the authors in \cite{BCHM} studied the structure of spaces that are invariant under the action of a discrete locally compact group (not necessarily abelian) that takes the form of a semidirect product. Following an idea of \cite{BCHM}, our goal in this paper is to study the operators commuting  with the action of a semidirect product group. More precisely, assume that $\textbf{R}$ is a  locally compact abelian group  and $\Lambda$ is a uniform lattice of $\textbf{R}$, assume also that $G$ is a group that acts on $\textbf{R}$ by continuous automorphisms. In \cite{BCHM}, the authors setted $\Gamma = \Lambda \times  G $ and showed that there is a unitary representation $T_k R_g$ of $\Gamma$ on  $L^2(\textbf{R})$ which is composition of the shift operator and a certain unitary operator. They then studied the structure of  the spaces invariant under $T_k R_g$ in terms of range functions. In this paper we define a $\Gamma$-preserving operator as a bounded linear operator on $L^2(\textbf{R})$ that commute with $T_k R_g$, and then we study the structure of the such operators. First of all, we give an equivalent condition on a bounded linear operator to be $\Gamma$-preserving. Then, we prove our main result that states that $\Gamma$-preserving are exactly shift preserving operators with an extra condition on the range operator.  As a application of our main result, we give a characterization of shift dilation invariant spaces (preserving operators) in terms of range functions (range operators). This paper is organized as follows. The rest of this section contains the preliminaries related to shift invariant spaces and $\Gamma$-invariant spaces which are studied in \cite{KRs, KRsh} and \cite{BCHM}, respectively. In Section 2, we state our main result in Theorem \ref{Main} which determines the structure of $\Gamma$-preserving operators. Indeed, we show that a bounded linear $U$ is $\Gamma$-preserving if and only if it is shift preserving and its range operator satisfies an extra condition. Also as a consequence of Theorem \ref{Main} we characterize shift- dilation invariant spaces and shift- dilation preserving operators. To this end we show that the composition of translation and dilation can be a representation of the semidirect product $Aut(\textbf{R}) \times \Lambda$ on $L^2(\textbf{R})$. Applying Theorem \ref{Main}, we get  characterizations on shift- dilation invariant spaces and shift- dilation preserving operators.

Let $\textbf{R}$ be a second countable locally compact abelian group with a uniform lattice $\Lambda$. A closed subspace $\mathcal{V}$ of $L^2(\textbf{R})$ is said to be shift invariant if $T_k \mathcal{V} \subseteq \mathcal{V}$ for all $k \in \Lambda$.  A range function is a mapping $J: \Omega \longrightarrow \lbrace \textrm{  closed subspaces  of   $l^2 (\Lambda ^{\perp})$ } \rbrace$. A range function $J$ is measurable if the mapping $\omega \mapsto \langle P_{J}(\omega)(a) , b \rangle$ is measurable for all $a ,b \in l^2 (\Lambda ^{\perp}) $, where $P_{J}(\omega)$ for the orthogonal projections of $l^2 (\Lambda ^{\perp})$ onto $J(\omega)$. In \cite{KRsh}, the authors characterized shift invariant spaces in terms of  range functions as follows.
\begin{proposition}
Let $\mathcal{V} \subseteq L^{2}(\textbf{R})$ be a closed subspace and $\mathcal{T}$ be the fiberization map. Then the following are equivalent.\\
(1)  $\mathcal{V}$ is a shift invariant  space. \\
(2) There exists a measurable range function $J: \Omega \longrightarrow \lbrace  closed \ subspaces \  of \   l^2 (\Lambda ^{\perp}) \rbrace $ such that 
\begin{equation*}
\mathcal{V}=\lbrace f \in L^{2}(\textbf{R}) : \mathcal{T}(f)(\omega)\in J(\omega) \text{,} \ \  \text{for a.e. }  \omega \in \Omega \rbrace.
\end{equation*}
Identifying range functions which are equivalent almost everywhere, the correspondence between shift invariant spaces and measurable range functions is one to one and onto. 
\end{proposition}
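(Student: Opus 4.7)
The plan is to exploit the fiberization map $\mathcal{T}\colon L^2(\textbf{R})\to L^2(\Omega,\ell^2(\Lambda^\perp))$, which is an isometric isomorphism intertwining each shift $T_k$ with multiplication by a character $\chi_k$ on $\Omega$. Under this map, shift invariance in $L^2(\textbf{R})$ becomes invariance under multiplication by the family $\{\chi_k\}_{k\in\Lambda}$ on the target side, and this pointwise structure is what lets fibers $J(\omega)$ arise naturally. I would also fix, once and for all, the pointwise orthogonal projection $P_{J(\omega)}$ for a range function $J$ and recall the elementary fact that measurability of $J$ is equivalent to measurability (in the weak operator sense) of $\omega\mapsto P_{J(\omega)}$.

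For the easier direction (2) $\Rightarrow$ (1), given a measurable range function $J$, I would define $P\phi(\omega):=P_{J(\omega)}\phi(\omega)$ on $L^2(\Omega,\ell^2(\Lambda^\perp))$. Measurability of $J$ yields that $P$ is a well-defined bounded orthogonal projection, whose range consists exactly of those $\phi$ with $\phi(\omega)\in J(\omega)$ a.e. Pulling back along $\mathcal{T}$ gives a closed subspace $\mathcal V$, and since multiplication by any $\chi_k$ obviously preserves the fiber condition $\phi(\omega)\in J(\omega)$, the pullback is shift invariant.

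For the harder direction (1) $\Rightarrow$ (2), set $\widetilde{\mathcal V}=\mathcal T(\mathcal V)$, which is a closed subspace of $L^2(\Omega,\ell^2(\Lambda^\perp))$ invariant under multiplication by every $\chi_k$. A trigonometric polynomial/Stone--Weierstrass approximation argument upgrades this to invariance under multiplication by every $\psi\in L^\infty(\Omega)$ (in particular, under multiplication by characteristic functions of measurable subsets of $\Omega$). I would then take a countable dense subset $\{\phi_n\}$ of $\widetilde{\mathcal V}$ and define $J(\omega):=\overline{\operatorname{span}}\{\phi_n(\omega): n\in\mathbb{N}\}$; the $L^\infty$-multiplier invariance and the localization argument show that the subspace associated to $J$ recovers $\widetilde{\mathcal V}$ exactly. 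Measurability of $J$ is obtained by writing $P_{J(\omega)}$ as a pointwise limit of Gram--Schmidt orthogonalizations of $\{\phi_n(\omega)\}$, each step of which is measurable.

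Uniqueness follows by a symmetric argument: if $J_1$ and $J_2$ yield the same shift invariant space, then for every $a\in\ell^2(\Lambda^\perp)$ and every measurable $E\subseteq\Omega$ one gets $\chi_E\,P_{J_1(\omega)}a = \chi_E\,P_{J_2(\omega)}a$ in $L^2$, forcing $J_1(\omega)=J_2(\omega)$ a.e. The main obstacle I anticipate is the measurability verification in (1) $\Rightarrow$ (2): building a countable fiber-generating family that is simultaneously dense in $\widetilde{\mathcal V}$ and measurable enough to produce a measurable projection field requires care, and it is here that the second countability of $\textbf{R}$ and the separability of $L^2(\Omega,\ell^2(\Lambda^\perp))$ are essential.
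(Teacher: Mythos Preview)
The paper does not prove this proposition: it is quoted verbatim as a known result from \cite{KRsh} (and ultimately goes back to Helson and Bownik, extended to LCA groups in \cite{Cab,KRr,KRs}). There is therefore no ``paper's own proof'' to compare against.

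That said, your outline is the standard argument used in those references. The fiberization map intertwines $T_k$ with multiplication by the character $\chi_k(\omega)=\langle\omega,k\rangle$, Pontryagin duality together with Stone--Weierstrass upgrades $\Lambda$-shift invariance to invariance under multiplication by all of $L^\infty(\Omega)$, and then one defines $J(\omega)$ as the closed span of the fibers of a countable dense family in $\mathcal{T}(\mathcal{V})$; measurability of $J$ is obtained from the fiberwise Gram--Schmidt process, and uniqueness follows by testing against indicator functions. One small point worth making explicit in the $(1)\Rightarrow(2)$ direction: the density argument showing that the span of $\{\chi_k:k\in\Lambda\}$ is weak-$*$ dense in $L^\infty(\Omega)$ uses that $\Lambda^\perp$ is the annihilator of $\Lambda$ and that $\Omega$ is (measure-theoretically) the compact group $\widehat{\textbf{R}}/\Lambda^\perp$, so the characters $\chi_k$ really do separate points; this is where the uniform-lattice hypothesis enters. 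Aside from that, your sketch matches the literature proofs and would go through without difficulty.
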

Let $\mathcal{V}$ be a shift invariant  subspace of $L^2 (\textbf{R})$  with range function $J$.  A range operator on $J$ is a mapping $\mathcal{R}$ from the fundamental domain $\Omega$ of $\widehat{\textbf{R}}/ \Lambda ^{\perp} $ to the set of all bounded linear operators on closed subspaces of $l^2(\Lambda^{\perp})$, so that the domain of $\mathcal{R}(\omega)$ equals $J(\omega)$ for a.e. $\omega \in \Omega$. A range operator $\mathcal{R}$ is called measurable, if the mapping $\omega \mapsto \langle \mathcal{R}(\omega)P_{J}(\omega)(a) , b \rangle$ is measurable for all $a,b \in l^2(\Lambda^{\perp})$, where $P_{J}(\omega)$ is the orthogonal projection of $l^2(\Lambda^{\perp})$ onto $J(\omega)$. A bounded linear operator $U$ on $L^2(\textbf{R})$ is said to be shift preserving associated if $UT_{k} = T_{k}U$, where $T_{k}$ is the shift operator. The next proposition, that is in \cite{KRsh}, gives a characterization of shift preserving operators in terms of the range operators by a range function approach.
\begin{proposition} 
Let $ \mathcal{V} \subseteq L^{2}(\textbf{R})$ be a  shift invariant subspace with range function $J$ and $U: \mathcal{V} \longrightarrow L^2(\textbf{R})$ be a bounded linear operator. Then the following are equivalent. \\
(1) The operator $U$ is shift preserving. \\
(3) There exists a measurable range operator $\mathcal{R}$ on $J$ such that for all $\phi \in \mathcal{V}$,
\begin{equation} \label{r.opg}
\mathcal{T} U\phi(\omega) = \mathcal{R}(\omega)(\mathcal{T}\phi(\omega)) \ \ a.e. \ \omega \in \Omega ,
\end{equation}
where $\mathcal{T}$ is the fiberization map.
\end{proposition}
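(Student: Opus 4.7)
The plan is to transfer the problem to the fiber side via the fiberization map $\mathcal{T}$, which is an isometric isomorphism from $L^{2}(\textbf{R})$ onto $L^{2}(\Omega,\ell^{2}(\Lambda^{\perp}))$, and to use the fact that under $\mathcal{T}$ each shift $T_{k}$ becomes multiplication by the character $e_{k}(\omega)=\langle\omega,k\rangle$ (viewed as a scalar-valued function on $\Omega$ acting fiberwise). Concretely, I would introduce $\widetilde{U}:=\mathcal{T}U\mathcal{T}^{-1}$, so that $U$ being shift preserving is equivalent to $\widetilde{U}$ commuting with the modulation operators $M_{e_{k}}$ for every $k\in\Lambda$, and then show that this exactly encodes the existence of a measurable range operator.

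For the easy implication, assuming the range operator representation, I would compute, for $\phi\in\mathcal{V}$ and $k\in\Lambda$,
\begin{equation*}
\mathcal{T}(UT_{k}\phi)(\omega)=\mathcal{R}(\omega)\bigl(\mathcal{T}(T_{k}\phi)(\omega)\bigr)=e_{k}(\omega)\mathcal{R}(\omega)\mathcal{T}\phi(\omega)=\mathcal{T}(T_{k}U\phi)(\omega),
\end{equation*}
which yields $UT_{k}=T_{k}U$ after applying $\mathcal{T}^{-1}$. This step only uses linearity of $\mathcal{R}(\omega)$ on each fiber and the intertwining property of $\mathcal{T}$ with shifts, both of which are in place from the preliminaries.

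For the non-trivial direction, starting from $UT_{k}=T_{k}U$, I would argue that $\widetilde{U}$ commutes with $M_{e_{k}}$ for all $k\in\Lambda$. Since $\{e_{k}\}_{k\in\Lambda}$ separates points of $\Omega$ and its linear span is uniformly dense in $C(\Omega)$ by Stone--Weierstrass (and weak-$*$ dense in $L^{\infty}(\Omega)$), a standard approximation argument upgrades this to commutation with $M_{\varphi}$ for every $\varphi\in L^{\infty}(\Omega)$. At this point I would invoke the decomposable-operator theorem from direct integral theory: any bounded operator on $L^{2}(\Omega,\ell^{2}(\Lambda^{\perp}))$ commuting with every scalar multiplication $M_{\varphi}$ is decomposable, meaning there exists a weakly measurable field $\{\mathcal{R}(\omega)\}_{\omega\in\Omega}$ of bounded operators on $\ell^{2}(\Lambda^{\perp})$ with $\operatorname{ess\,sup}_{\omega}\|\mathcal{R}(\omega)\|=\|\widetilde{U}\|$ such that $\widetilde{U}F(\omega)=\mathcal{R}(\omega)F(\omega)$ a.e. Defining $\mathcal{R}(\omega)$ on $J(\omega)$ (and extending arbitrarily, e.g.\ by zero on $J(\omega)^{\perp}$) gives the desired range operator, and the identity $\mathcal{T}U\phi(\omega)=\mathcal{R}(\omega)\mathcal{T}\phi(\omega)$ follows for every $\phi\in\mathcal{V}$.

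The main obstacle I expect is the measurability bookkeeping: verifying that $\omega\mapsto\langle\mathcal{R}(\omega)P_{J}(\omega)a,b\rangle$ is measurable for all $a,b\in\ell^{2}(\Lambda^{\perp})$ in the precise sense required by the definition of a measurable range operator, and ensuring that $\mathcal{R}(\omega)$ leaves $J(\omega)$ invariant (which follows because $\widetilde{U}$ maps $\mathcal{T}(\mathcal{V})$ into itself and $\mathcal{T}(\mathcal{V})$ consists exactly of sections $F$ with $F(\omega)\in J(\omega)$ a.e.). Uniqueness of $\mathcal{R}$ a.e.\ on $J(\omega)$ also needs to be noted, and is obtained by testing against a countable dense set of $\phi\in\mathcal{V}$ whose fibers span $J(\omega)$ for a.e.\ $\omega$, a device that is available in the second-countable setting assumed on $\textbf{R}$.
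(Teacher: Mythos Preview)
The paper does not prove this proposition; it is quoted from \cite{KRsh} as a preliminary result, so there is no in-paper argument to compare against.

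Your strategy is the standard one and is essentially correct: conjugate by $\mathcal{T}$, translate shift-preserving into commutation with the modulations $M_{e_k}$, upgrade via Stone--Weierstrass (legitimate since $\widehat{\textbf{R}}/\Lambda^{\perp}$ is compact, $\Lambda$ being discrete) to commutation with all $M_{\varphi}$, $\varphi\in L^{\infty}(\Omega)$, and then invoke the characterization of decomposable operators on a direct integral. Two points need care. First, $\widetilde{U}=\mathcal{T}U\mathcal{T}^{-1}$ is a priori defined only on the proper subspace $\mathcal{T}(\mathcal{V})$, so the decomposable-operator theorem does not apply verbatim; the clean fix is to extend $U$ by $0$ on $\mathcal{V}^{\perp}$ (which is again shift-invariant), obtain a shift-preserving operator on all of $L^{2}(\textbf{R})$, apply the theorem, and then restrict the resulting fiber operators to $J(\omega)$. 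Second, your claim that ``$\mathcal{R}(\omega)$ leaves $J(\omega)$ invariant because $\widetilde{U}$ maps $\mathcal{T}(\mathcal{V})$ into itself'' is not warranted: the hypothesis is $U:\mathcal{V}\to L^{2}(\textbf{R})$, not $U:\mathcal{V}\to\mathcal{V}$. This is harmless, since the definition of range operator on $J$ only fixes the \emph{domain} of $\mathcal{R}(\omega)$ to be $J(\omega)$, not its codomain.

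For comparison, the proof in \cite{B} (of which \cite{KRsh} is the LCA analogue) proceeds more constructively: instead of invoking the abstract decomposable-operator theorem, one chooses a countable generating family for $\mathcal{V}$ whose fibers span $J(\omega)$ a.e., defines $\mathcal{R}(\omega)$ on those fibers by the relation $\mathcal{R}(\omega)\mathcal{T}\phi(\omega):=\mathcal{T}U\phi(\omega)$, and checks well-definedness, boundedness, and measurability by hand. Your direct-integral route is shorter and more conceptual; the constructive route avoids importing a black box and makes uniqueness and the norm identity $\operatorname{ess\,sup}_{\omega}\|\mathcal{R}(\omega)\|=\|U\|$ transparent.
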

Now let $G$ be a group that acts on $\textbf{R}$ by continuous automorphisms. More precisely, suppose that there exists a continuous action $G \times \textbf{R} \longrightarrow \textbf{R}, \ (g,x)\mapsto gx$ such that the mapping $\textbf{R} \longrightarrow \textbf{R}, \ x\mapsto gx$ is an automorphism. Let $\widehat{\textbf{R}}$ be the dual group of $\textbf{R}$ and $\Lambda ^{\perp}$ be  the annihilator of $\Lambda$ in $\widehat{\textbf{R}}$. The action of $G$ on $\textbf{R}$ induces an action of $G$ on $\widehat{\textbf{R}}$ by duality $\langle g^* , \xi \rangle =\langle \xi, gx \rangle$, where $\xi \in \widehat{\textbf{R}}$. This action satisfies $g_1 ^ * g_2 ^* = (g_2 g_1)^*$. Assume that the action of $G$ on $\textbf{R}$ preserves $\Lambda$, i.e. $g\Lambda = \Lambda$. Equivalently, assume that $g^* \Lambda^{\perp}  = \Lambda^{\perp}$. For a fundamental domain $\Omega$ of $\widehat{\textbf{R}} / \Lambda^{\perp}$, the action of $G$ on $\Omega$ is denoted for simplicity by the same notation. 

Using the fact that the action of $G$ preserves $\Lambda$, one can define  the semidirect product $\Gamma = \Lambda \times  G =\lbrace (k,g) : k \in \Lambda , g \in G \rbrace$ with
\begin{equation*}
(k,g) . (k',g') = (k + gk' , gg').
\end{equation*}
Now the action of $\Gamma$ on $\textbf{R}$ given by 
\begin{equation*}
\gamma x = gx+k, \ \ \ \   \  \gamma= (k, g) \in \Gamma , x \in \textbf{R} 
\end{equation*}
will provide the symmetry with respect to which we study invariance. 

For a Hilbert space $\mathcal{H}$, we denote by $\mathcal{U}(\mathcal{H})$ the unitary operators on $\mathcal{H}$. We work with the following representations. 
\begin{align*}
T : \Lambda \longrightarrow \mathcal{U}(L^2(\textbf{R})), \ \  T_k f(x)= f(x-k) \\
R : G \longrightarrow \mathcal{U}(L^2(\textbf{R})), \ \ R_g f(x)= f(g^{-1}x),
\end{align*}
for $k \in \Lambda$, $g \in G$ and $f \in L^2(\textbf{R})$. It can be shown that $(k,g) \mapsto T_k R_g $ defines a unitary representation of $\Gamma$ on $L^2(\textbf{R})$. The following lemma which is \cite[Lemma 2.2]{BCHM} describes the Fourier image of $T_k$ and $R_g$.
\begin{lemma}
For all $f \in L^2(\textbf{R})$ and $(k,g) \in \Gamma$ 
\begin{equation*}
\widehat{T_k f}(\xi) = \langle \xi , k \rangle \widehat{f}(\xi) \ , \   \widehat{R_g f}(\xi)= f(g^* \xi).
\end{equation*}
\end{lemma}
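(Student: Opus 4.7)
The plan is to verify each identity by a direct computation: write the Fourier transform as an integral against the character $\langle \xi, \cdot \rangle$ and perform a single change of variable in the Haar integral on $\textbf{R}$. The only tools required are translation invariance of Haar measure on $\textbf{R}$, the multiplicative property of the characters, and the duality that defines $g^{*}$.

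For the translation identity, I start from
$\widehat{T_k f}(\xi) = \int_{\textbf{R}} f(x-k)\,\langle \xi, x\rangle\,dx$,
substitute $y = x-k$ (legitimate by translation invariance of Haar measure on $\textbf{R}$), and apply the character property $\langle \xi, y+k\rangle = \langle \xi, y\rangle\langle \xi, k\rangle$. Pulling the constant $\langle \xi, k\rangle$ outside the integral yields $\langle \xi, k\rangle\,\widehat{f}(\xi)$, which is the first claim.

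For the dilation identity, I begin with
$\widehat{R_g f}(\xi) = \int_{\textbf{R}} f(g^{-1}x)\,\langle \xi, x\rangle\,dx$
and substitute $y = g^{-1}x$. Since $g$ is a continuous automorphism of $\textbf{R}$, this change of variable scales Haar measure by the modulus of $g$, and this modulus must equal $1$ in our setting because $R_g$ has been taken to be unitary on $L^{2}(\textbf{R})$. The defining duality $\langle \xi, gy\rangle = \langle g^{*}\xi, y\rangle$ then recasts the integrand as $f(y)\,\langle g^{*}\xi, y\rangle$, so the integral equals $\widehat{f}(g^{*}\xi)$, matching the intended right-hand side.

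The one mild delicacy is the bookkeeping around $g$: one must observe that the pushforward of Haar measure under $g^{-1}$ is a scalar multiple of Haar measure, that unitarity of $R_g$ forces this scalar to equal $1$, and that the convention for $g^{*}$ is consistent with the sign convention adopted for the Fourier transform. None of these is a genuine obstacle; once the conventions are pinned down, each identity reduces to a single change of variable.
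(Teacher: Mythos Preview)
Your proof is correct; the change-of-variable arguments are the standard ones, and your observation that unitarity of $R_g$ forces the modulus of the automorphism $g$ to be $1$ is exactly what justifies dropping the Jacobian factor. You even implicitly repair the typo in the displayed statement, where $f(g^*\xi)$ should read $\widehat{f}(g^*\xi)$.

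As for comparison with the paper: there is nothing to compare. The paper does not prove this lemma at all---it is quoted verbatim as \cite[Lemma~2.2]{BCHM} and stated without argument. Your direct computation is precisely the proof one would expect to find in the cited reference.
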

In \cite{KRsh} it is shown that there exists an isometric isomorphism $\mathcal{T}$, known as "fiberization map", between Hilbert spaces $L^2(\textbf{R})$ and $L^2(\Omega , l^2(\Lambda^{\perp}))$ as follows
\begin{equation*}
\mathcal{T}f(\omega) = \lbrace \widehat{f}(\omega + k)  \rbrace_{k \in \Lambda^{\perp}}.
\end{equation*}
In \cite{BCHM}, the authors defined a representation of $G$ on $l^{2}(\Lambda^{\perp})$ by
\begin{equation} \label{r}
(r_g (a))(s) = a(g^* s),
\end{equation}
 for $a \in l^{2}(\Lambda^{\perp})$ and $s \in   \Lambda^{\perp}$, and proved the following property 
\begin{equation*}
\mathcal{T}(T_k R_g f)(\omega) = \langle \omega,k \rangle r_g \mathcal{T}f(g^* \omega).
\end{equation*}
In \cite{}, also a representation $\Pi$ of $G$ on $L^2(\Omega , l^2(\Lambda^{\perp}))$ is defined as
\begin{equation} \label{P}
 \Pi(g) = \mathcal{T}R_g \mathcal{T}^{-1}
\end{equation}
and the following relation between $\Pi(g)$ and $r_g$ is proved.
\begin{equation} \label{Prg}
\Pi(g) F(\omega)= r_g F(g^* \omega), \ \  F \in L^2(\Omega , l^2(\Lambda^{\perp})) .
\end{equation}

Now we introduce our invariant spaces which is recently studied in \cite{BCHM}. A closed subspace $V$ of  $ L^{2}(\textbf{R})$ is called $\Gamma$-invariant if $T_k R_g V \subseteq V$ or equivalently
\begin{equation*}
f \in V \Longrightarrow T_k f \in V \ \ \forall k \in \Lambda, and \ R_g f \in V \ \  \forall g \in G.
\end{equation*}
Note that $\Gamma$- invariance of a closed subspace is equivalent to that $V\subseteq L^{2}(\textbf{R}) $ is shift invariant, and also $\mathcal{T}V \subseteq L^2(\Omega , l^2(\Lambda^{\perp}))$ is invariant under $\Pi(g)$.

The following proposition, which is \cite[Theorem 3.3]{BCHM}, characterizes $\Gamma$-invariant subspaces of $ L^{2}(\textbf{R})$ in terms of range functions. 
\begin{proposition} \label{mainspace}
A closed subspace $V$ of $L^2 (\textbf{R})$ is $\Gamma$-invariant if and only if it is shift invariant and its range function $J$ satisfies $J(\omega) = \Pi(g) J(\omega)$, where $\Pi(g)$ is the operator defined in \eqref{P}.
\end{proposition}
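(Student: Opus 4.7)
The plan is to reduce the proposition to an equivalent statement on $\mathcal{T}V$, namely the characterization of $\Pi(g)$-invariance of $\mathcal{T}V$ at the level of fibers, and then use the standard range-function description of shift invariant spaces (the first proposition in the excerpt) to pass from global invariance of $\mathcal{T}V$ to pointwise invariance of $J(\omega)$.

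First I would invoke the remark immediately preceding the statement: $V$ is $\Gamma$-invariant iff $V$ is shift invariant and $\mathcal{T}V\subseteq L^2(\Omega,l^2(\Lambda^\perp))$ is invariant under $\Pi(g)$ for every $g\in G$. Therefore it suffices to show that, assuming $V$ shift invariant with measurable range function $J$, the space $\mathcal{T}V$ is $\Pi(g)$-invariant for every $g\in G$ if and only if its fibers satisfy the claimed condition, which by \eqref{Prg} should be read as $r_g J(g^*\omega)=J(\omega)$ for a.e.\ $\omega\in\Omega$.

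For the forward direction, assume $\Pi(g)\mathcal{T}V\subseteq \mathcal{T}V$. By the range-function characterization, $\mathcal{T}V=\{F\in L^2(\Omega,l^2(\Lambda^\perp)):F(\omega)\in J(\omega)\text{ a.e.}\}$. Given any measurable section $F$ with $F(\omega)\in J(\omega)$ a.e., relation \eqref{Prg} says $\Pi(g)F(\omega)=r_g F(g^*\omega)$, and this must again lie in $J(\omega)$ a.e. By choosing a countable dense family of sections of $J$ (using measurability of $J$ and separability of $l^2(\Lambda^\perp)$) and pushing forward by $g^*$ (which is an automorphism of $\widehat{\textbf{R}}/\Lambda^\perp$ and so acts on $\Omega$ by a measure-class-preserving bijection), I would deduce $r_g J(g^*\omega)\subseteq J(\omega)$ for a.e.\ $\omega$. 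Applying the same argument with $g^{-1}$ in place of $g$ and using $r_{g^{-1}}=r_g^{-1}$ together with the relation $g^*_1 g^*_2=(g_2g_1)^*$ yields the reverse inclusion, hence the pointwise equality.

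For the converse, assume $r_g J(g^*\omega)=J(\omega)$ a.e. Given $F\in\mathcal{T}V$, i.e.\ $F(\omega)\in J(\omega)$ a.e., the identity $\Pi(g)F(\omega)=r_g F(g^*\omega)$ shows that $\Pi(g)F(\omega)\in r_g J(g^*\omega)=J(\omega)$ for a.e.\ $\omega$, so $\Pi(g)F\in\mathcal{T}V$. Combined with shift invariance of $V$, this gives $\Gamma$-invariance by the remark quoted above.

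The main obstacle I anticipate is the passage from ``$\Pi(g)F\in\mathcal{T}V$ for every $F\in\mathcal{T}V$'' to the pointwise statement ``$r_g J(g^*\omega)\subseteq J(\omega)$ a.e.'': one needs a measurable selection argument, using a countable dense family of measurable sections of $J$ (available from the measurability of $J$), together with the fact that $g^*$ induces a bi-measurable, measure-class-preserving bijection of $\Omega$ so that the exceptional null sets can be transported without loss. Once this measure-theoretic bookkeeping is handled, the rest of the argument is a straightforward application of \eqref{Prg} and the range-function description of shift invariant subspaces.
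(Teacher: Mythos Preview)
Your argument is correct and follows the natural line: reduce to the equivalence ``$\mathcal{T}V$ is $\Pi(g)$-invariant $\Leftrightarrow$ $r_g J(g^*\omega)=J(\omega)$ a.e.'', then handle each implication via \eqref{Prg} and the range-function description, with the only genuine work being the measurable-selection step to pass from global to fiberwise invariance. You have also correctly identified (and sketched the resolution of) the one delicate point, namely that $g^*$ acts on $\Omega$ by a measure-class-preserving bijection so that null sets can be transported.

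However, there is nothing to compare against: the paper does not prove this proposition. It is quoted verbatim as \cite[Theorem~3.3]{BCHM} and stated without argument, serving only as background for the paper's own Theorem~\ref{Main} on $\Gamma$-preserving operators. Your proposal is therefore not a reconstruction of the paper's proof but rather a self-contained proof of the cited result; as such it is sound, and indeed it mirrors the structure one would expect in the original reference (the forward direction by density of sections, the reverse direction by direct computation, equality from applying both $g$ and $g^{-1}$).
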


\section{Main results}
 Now we study properties of operators on $L^2(\textbf{R})$ that commute with  $ T_k R_g $ which  is a unitary representation of $\Gamma = \Lambda \times  G $ on $L^2(\textbf{R})$.  First of all note that for a bounded linear operator $U: L^2(\textbf{R}) \longrightarrow  L^2(\textbf{R})$, using the fiberization map $\mathcal{T}$, one can define an induced  operator $U' :  L^2(\Omega , l^2(\Lambda^{\perp}))  \longrightarrow   L^2(\Omega , l^2(\Lambda^{\perp})) $ given by $U' = \mathcal{T} U \mathcal{T}^{-1}$. In the sequel we define a $\Gamma$-preserving operator on $ L^2(\textbf{R})$ and define an equivalent definition via $U'$. 
\begin{definition}
Let $V$ be a shift invariant and $U: V \longrightarrow  L^2(\textbf{R})$ be a bounded linear operator. Then $U$ is said to be $\Gamma$-preserving if $UT_k R_g = T_k R_g U$, or equivalently if $U$ is shift preserving and $U R_g = R_g U$.
\end{definition}
In the following lemma we prove an equivalent condition for a bounded linear operator on $L^2(\textbf{R})$ to be $\Gamma$-preserving in terms of the induced operator $U'$.
\begin{lemma} \label{pi}
For a shift invariant space $V$, a bounded linear operator $U: V \longrightarrow  L^2(\textbf{R})$ is $\Gamma$-preserving if and only if $U$ is shift preserving and the induced operator $U'$ satisfies $U'\Pi(g) = \Pi(g)U'$.
\end{lemma}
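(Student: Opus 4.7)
The plan is to observe that, once the definition of $\Gamma$-preserving is unpacked, the lemma reduces to a one-line conjugation check. By the definition above, $U$ is $\Gamma$-preserving exactly when $U$ is shift preserving \emph{and} $UR_g = R_g U$ for all $g \in G$. Since the ``shift preserving'' clause is common to both sides of the claimed equivalence, everything reduces to showing that
\begin{equation*}
UR_g = R_g U \ \text{on}\ V \iff U'\Pi(g) = \Pi(g)U' \ \text{on}\ \mathcal{T}V.
\end{equation*}

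The key step is to use the definitions $U' = \mathcal{T}U\mathcal{T}^{-1}$ and $\Pi(g) = \mathcal{T}R_g\mathcal{T}^{-1}$ from \eqref{P}. I would compute
\begin{equation*}
U'\Pi(g) = \mathcal{T}U\mathcal{T}^{-1}\,\mathcal{T}R_g\mathcal{T}^{-1} = \mathcal{T}(UR_g)\mathcal{T}^{-1},
\end{equation*}
and similarly
\begin{equation*}
\Pi(g)U' = \mathcal{T}R_g\mathcal{T}^{-1}\,\mathcal{T}U\mathcal{T}^{-1} = \mathcal{T}(R_g U)\mathcal{T}^{-1}.
\end{equation*}
Because $\mathcal{T}$ is an isometric isomorphism between $L^2(\textbf{R})$ and $L^2(\Omega, l^2(\Lambda^{\perp}))$, conjugation by $\mathcal{T}$ is a bijection on bounded operators, so the two identities $\mathcal{T}(UR_g)\mathcal{T}^{-1} = \mathcal{T}(R_gU)\mathcal{T}^{-1}$ and $UR_g = R_gU$ are equivalent. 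This finishes both directions simultaneously.

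One minor bookkeeping point worth mentioning: the operator $U$ is defined only on the shift invariant subspace $V$, so to form $U'$ one should first extend $U$ to a bounded operator on all of $L^2(\textbf{R})$ (for instance by zero on $V^{\perp}$) so that $\mathcal{T}U\mathcal{T}^{-1}$ makes sense on $L^2(\Omega, l^2(\Lambda^{\perp}))$; alternatively, since $V$ is shift invariant, Proposition~\ref{mainspace} together with the relation \eqref{Prg} shows that $\Pi(g)$ leaves $\mathcal{T}V$ invariant when $V$ is additionally $\Gamma$-invariant, and in general one may simply work with $U'$ restricted to $\mathcal{T}V$. I do not foresee a real obstacle here: the proof is essentially a formal conjugation argument, and the only substantive input is the earlier identification $\Pi(g) = \mathcal{T}R_g\mathcal{T}^{-1}$.
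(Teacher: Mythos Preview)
Your proposal is correct and follows essentially the same approach as the paper: both reduce the claim to the equivalence $UR_g = R_gU \iff U'\Pi(g) = \Pi(g)U'$ and verify it by the conjugation identity $U'\Pi(g) = \mathcal{T}(UR_g)\mathcal{T}^{-1}$, $\Pi(g)U' = \mathcal{T}(R_gU)\mathcal{T}^{-1}$. Your extra remark about the domain of $U'$ is a reasonable clarification, but the core argument is the same one-line conjugation check given in the paper.
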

\begin{proof}
We must show that $U R_g = R_g U$ if and only if $U'\Pi(g) = \Pi(g)U'$. If $U R_g = R_g U$, then 
\begin{align*}
\Pi(g)U' &= \mathcal{T} R_g \mathcal{T}^{-1}\mathcal{T} U \mathcal{T}^{-1} \cr
&= \mathcal{T} R_g  U \mathcal{T}^{-1} \cr
&= \mathcal{T} UR_g  \mathcal{T}^{-1} \cr
&= \mathcal{T} U \mathcal{T}^{-1}\mathcal{T} R_g \mathcal{T}^{-1} \cr
&= U'\Pi(g).
\end{align*}
The converse implication has the same proof.
\end{proof}
 In the following theorem, which is the main result of this paper, we give a characterization of $\Gamma$-preserving operator in terms of range operators. Indeed, we show that $\Gamma$-preserving operators on $L^2(\textbf{R})$ are exactly those  shift preserving operators that have an extra condition on their range operators. 
 \begin{theorem} \label{Main}
 Let $V \subseteq L^{2}(\textbf{R})$ be a $\Gamma$-invariant space and  $U: V \longrightarrow  L^2(\textbf{R})$ be a bounded linear operator. Then $U$ is $\Gamma$-preserving if and only if it is shift preserving and also its range operator $\mathcal{R}(\omega)$ satisfies
 \begin{equation} \label{main}
 \mathcal{R}(g^* \omega)= r_{g^{-1}} \mathcal{R}(\omega) r_g,
 \end{equation}
 where $r_g$ is the operators defined by the representation $r$ defined as \eqref{r}.
 \end{theorem}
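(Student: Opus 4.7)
The plan is to reduce the question to the commutation $U'\Pi(g)=\Pi(g)U'$ via Lemma \ref{pi}, and then translate this commutation into a pointwise identity on fibers using the description of $U'$ coming from the range operator together with the formula \eqref{Prg} for $\Pi(g)$.

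\smallskip

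First I would note that since $V$ is assumed to be $\Gamma$-invariant (hence shift invariant), the shift-preserving part of both sides of the equivalence is already in place, and by Lemma \ref{pi} the remaining content of ``$\Gamma$-preserving'' is the identity $U'\Pi(g)=\Pi(g)U'$ on $\mathcal{T}V\subseteq L^2(\Omega,l^2(\Lambda^{\perp}))$. On the other hand, the shift-preserving hypothesis together with Proposition about range operators (equation \eqref{r.opg}) says that $U'$ acts fiberwise: for $F\in \mathcal{T}V$,
\begin{equation*}
U'F(\omega)=\mathcal{R}(\omega)F(\omega) \quad \text{a.e.}\ \omega\in\Omega.
\end{equation*}

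\smallskip

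Next I would compute the two sides of the commutation using \eqref{Prg}. On the one hand,
\begin{equation*}
U'\Pi(g)F(\omega)=\mathcal{R}(\omega)\bigl(\Pi(g)F\bigr)(\omega)=\mathcal{R}(\omega)\,r_g F(g^*\omega),
\end{equation*}
while on the other hand,
\begin{equation*}
\Pi(g)U'F(\omega)=r_g\bigl(U'F\bigr)(g^*\omega)=r_g\mathcal{R}(g^*\omega)F(g^*\omega).
\end{equation*}
Equality of these two expressions for a.e.\ $\omega$ and all $F\in\mathcal{T}V$, combined with the fact that $g^*$ is a bijection of $\Omega$ (and that $\mathcal{T}V$ has ``enough'' fibers at a.e.\ point because $J(\omega)$ is the full domain of $\mathcal{R}(\omega)$), yields the fiberwise identity $\mathcal{R}(\omega)r_g=r_g\mathcal{R}(g^*\omega)$ on $J(g^*\omega)$. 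Using that $r$ is a representation, so that $r_g^{-1}=r_{g^{-1}}$, this rearranges to $\mathcal{R}(g^*\omega)=r_{g^{-1}}\mathcal{R}(\omega)r_g$, which is \eqref{main}. The converse direction runs in reverse: assuming \eqref{main} and the shift-preserving hypothesis, the two displayed expressions above coincide, hence $U'\Pi(g)=\Pi(g)U'$, and Lemma \ref{pi} gives that $U$ is $\Gamma$-preserving.

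\smallskip

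The main technical point, and the step I expect to be the only subtlety, is justifying the passage ``equality for all $F\in\mathcal{T}V$ implies the pointwise identity of operators on $J(g^*\omega)$''. This requires choosing, for each element of $J(g^*\omega)$, a section $F\in\mathcal{T}V$ realizing that vector at $g^*\omega$ up to a measurable selection, and being careful that the exceptional null set does not depend on the vector; this is handled exactly as in the proof of the range-operator characterization of shift-preserving operators in \cite{KRsh}, so I would either invoke that argument or reproduce it briefly. Everything else is formal manipulation with $\Pi(g)$, $r_g$, and the fiberwise action of $\mathcal{R}$.
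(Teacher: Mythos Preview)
Your proposal is correct and follows essentially the same approach as the paper: reduce to $U'\Pi(g)=\Pi(g)U'$ via Lemma \ref{pi}, then compute both sides fiberwise using \eqref{Prg} and $U'F(\omega)=\mathcal{R}(\omega)F(\omega)$. The only organizational difference is in the ``only if'' direction: where you compare the two expressions and then argue (via a selection argument as in \cite{KRsh}) that equality for all $F$ forces the pointwise operator identity, the paper instead rewrites $U'F(\omega)=\Pi(g)U'\Pi(g^{-1})F(\omega)=r_g\mathcal{R}(g^*\omega)r_{g^{-1}}F(\omega)$, recognizes $\omega\mapsto r_g\mathcal{R}(g^*\omega)r_{g^{-1}}$ as a second range operator for $U$, and invokes the \emph{uniqueness} of the range operator to conclude \eqref{main} directly---this packages the step you flagged as subtle without reproducing the selection argument.
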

 \begin{proof}
First assume that $U$ is shift preserving and its range operator satisfies \eqref{main}. Using  Lemma \ref{pi}, it is enough to show that $U'\Pi(g) = \Pi(g)U'$. By \eqref{Prg}, definitions of $U'$ and $\mathcal{R}(\omega)$ we have for $F \in L^2(\Omega , l^2(\Lambda^{\perp}))$
\begin{align*}
\Pi(g)U'F(\omega) & = r_g(U'F(g^* \omega)) \cr 
&= r_g (\mathcal{T} U \mathcal{T}^{-1} F(g^* \omega)) \cr 
&= r_g (\mathcal{R}(g^* \omega)\mathcal{T} \mathcal{T}^{-1} F(g^* \omega)) \cr 
&= r_g (\mathcal{R}(g^* \omega) F(g^* \omega)) \cr 
&= r_g (r_{g^{-1}}\mathcal{R}(\omega)r_g F(g^* \omega)) \cr 
&= \mathcal{R}(\omega)(r_g F(g^* \omega)) \cr
 &= \mathcal{R}( \omega)(\Pi_g F( \omega)) \cr 
&= \mathcal{R}(\omega)(\mathcal{T} R_g \mathcal{T}^{-1} F( \omega)) \cr 
&= \mathcal{T} U R_g \mathcal{T}^{-1} F(\omega) \cr 
&= U'\Pi(g)F(\omega).
\end{align*}
Now suppose that $U$ is $\Gamma$-preserving. Then $U$ is shift preserving, so we must prove that its range operator satisfies \eqref{main}. Put $\mathcal{R}_g (\omega)= r_{g} \mathcal{R}(\omega) r_{g^{-1}}$. Then $\mathcal{R}_g$ is clearly a measurable range operator. On the other hand since $UR_g = R_g U$, by Lemma \ref{pi}, we have $U'\Pi(g) = \Pi(g)U'$, and so for $F \in L^2(\Omega , l^2(\Lambda^{\perp}))$
\begin{align*}
\mathcal{T}U\mathcal{T}^{-1} F(\omega) =  U'F(\omega) &= \Pi(g)U'\Pi(g^{-1})F(\omega) \cr
 &= r_g(U' \Pi(g^{-1})F(g^* \omega)) \cr
&= r_g (\mathcal{R}(g^* \omega)(\Pi(g^{-1})F(g^* \omega))) \cr
&= r_g \mathcal{R}(g^* \omega) ( F((g^{-1})^{*}g^* \omega)) \cr
&= r_g \mathcal{R}(g^* \omega) r_{g^{-1}} F(\omega) \cr
&= \mathcal{R}_g (\omega)(F(\omega)).
\end{align*}
This shows that $\mathcal{R}_g (\omega)$ is a range operator for $U$. Now by uniqueness of the range operator we conclude that $\mathcal{R}_g (\omega)=\mathcal{R}(\omega)$, and so \eqref{main} holds. 
\end{proof}
As an application of  Proposition \ref{mainspace} and Theorem \ref{Main}, we can characterize shift-dilation invariant spaces and shift-dilation preserving operators. Consider  the representation $D: Aut(\textbf{R}) \longrightarrow \mathcal{U}( L^{2}(\textbf{R}))$ given by the dilation operator $D_\alpha f (x) = f(\alpha^{-1}x)$. Note that $Aut(\textbf{R})$ acts on $\textbf{R}$ by $(\alpha, x)\mapsto \alpha(x)$ and the induced action on $\widehat{\textbf{R}}$ is defined by $\langle \alpha^* , \xi \rangle =\langle \xi, \alpha(x) \rangle$, where $\xi \in \widehat{\textbf{R}}$. A closed subspace $V \subseteq L^2 (\textbf{R})$ is said to be shift-dilation invariant if $T_k D_{\alpha} V \subseteq V$, for all $k \in \Lambda$ and $\alpha \in Aut(\textbf{R})$. In this setting the representations \eqref{r} and \eqref{P} are defined as follows
\begin{align*}
r : Aut(\textbf{R}) &\longrightarrow \mathcal{U}(l^2(\Lambda^{\perp})), \ \  r_\alpha a(s)= a(\alpha^* s) \\
\Pi : Aut(\textbf{R}) &\longrightarrow \mathcal{U}( L^2(\Omega , l^2(\Lambda^{\perp}))), \ \ \Pi(\alpha)= \mathcal{T}D_{\alpha} \mathcal{T}^{-1}.
\end{align*}
Now by Proposition \ref{mainspace} we can characterize shift-dilation invariant subspaces as follows.
\begin{corollary}
A closed subspace $V$ of $L^2 (\textbf{R})$ is shift-dilation invariant if and only if it is shift invariant and its range function $J$ satisfies $J(\omega) = \Pi(\alpha) J(\omega)$.
\end{corollary}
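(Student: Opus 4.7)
The plan is to recognize the shift-dilation setting as a special instance of the $\Gamma$-invariance framework developed earlier and then invoke Proposition \ref{mainspace} with essentially no further work. Concretely, I would take the group $G$ of the general theory to be $\text{Aut}(\textbf{R})$ (or, more carefully, the subgroup consisting of those automorphisms preserving $\Lambda$, since that hypothesis was imposed when defining $\Gamma$). The action $(\alpha, x)\mapsto \alpha(x)$ is by continuous automorphisms, the induced dual action satisfies $\alpha^*\Lambda^{\perp} = \Lambda^{\perp}$, and hence $\Gamma = \Lambda \times \text{Aut}(\textbf{R})$ is a semidirect product exactly as in Section 1.

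Next, I would observe that in this specialization the unitary $R_g$ from the general construction coincides with the dilation $D_\alpha$, since both are defined by the formula $f \mapsto f(\alpha^{-1}\cdot)$. Consequently, $T_k D_\alpha$ is precisely the unitary representation of $\Gamma$ on $L^2(\textbf{R})$ from Section 1, and a closed subspace $V$ is shift-dilation invariant exactly when it is $\Gamma$-invariant with respect to this representation. Moreover, the representations $r$ and $\Pi$ introduced just before the corollary are the instantiations of the formulas \eqref{r} and \eqref{P} for this choice of $G$, so they agree with the operators appearing in the hypotheses of Proposition \ref{mainspace}.

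With these identifications in place, the statement follows immediately from Proposition \ref{mainspace}: the proposition asserts that $V$ is $\Gamma$-invariant if and only if $V$ is shift invariant and its range function satisfies $J(\omega) = \Pi(g)J(\omega)$ for every $g \in G$, and substituting $g = \alpha$ yields the condition in the corollary. The main (and essentially only) point to verify is the bookkeeping check that $\text{Aut}(\textbf{R})$ really preserves $\Lambda$ in the intended sense; if not, one must implicitly restrict to the stabilizer of $\Lambda$ in $\text{Aut}(\textbf{R})$ for the hypotheses of Proposition \ref{mainspace} to apply verbatim. Beyond this cosmetic point, no new calculation is required.
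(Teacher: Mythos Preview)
Your proposal is correct and matches the paper's approach exactly: the corollary is stated there without proof, as an immediate specialization of Proposition~\ref{mainspace} with $G=\mathrm{Aut}(\mathbf{R})$ and $R_g=D_\alpha$. Your remark about needing $\alpha\Lambda=\Lambda$ is a valid caveat that the paper leaves implicit.
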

Similarly, we can define a shift-dilation preserving operator as a bounded linear operator that commutes with $T_k D_{\alpha}$ for all $k \in \Lambda$ and $\alpha \in Aut(\textbf{R})$. The next corollary, which is an application of Theorem \ref{Main},  characterizes shift-dilation preserving operators on $L^2 (\textbf{R})$.
\begin{corollary}
 Let $V \subseteq L^{2}(\textbf{R})$ be a shift-dilation invariant space and  $U: V \longrightarrow  L^2(\textbf{R})$ be a bounded linear operator. Then $U$ is shift-dilation preserving if and only if it is shift preserving and also its range operator $\mathcal{R}(\omega)$ satisfies
 \begin{equation*}
\mathcal{R}(\alpha^{*} \omega)= r_{\alpha^{-1}} \mathcal{R}(\omega) r_\alpha.
\end{equation*}
\end{corollary}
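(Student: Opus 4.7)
The plan is to exhibit this corollary as the direct specialization of Theorem \ref{Main} in which the acting group is taken to be $G = Aut(\textbf{R})$ (implicitly restricted to automorphisms preserving $\Lambda$, as required by the standing hypothesis of Section 2) and the unitary representation $R_g$ is taken to be the dilation $D_\alpha$.

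First I would verify that the hypotheses of the semidirect product setup apply to this choice. The action $(\alpha, x) \mapsto \alpha(x)$ is by continuous automorphisms by the very definition of $Aut(\textbf{R})$, and under the preservation assumption $\alpha \Lambda = \Lambda$ one obtains the semidirect product $\Lambda \times Aut(\textbf{R})$ with multiplication $(k,\alpha)(k',\alpha') = (k+\alpha k', \alpha \alpha')$. A short computation, identical in form to the one yielding that $(k,g) \mapsto T_k R_g$ is a unitary representation, shows that $(k,\alpha) \mapsto T_k D_\alpha$ is a unitary representation of this semidirect product on $L^2(\textbf{R})$.

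Next I would match the dictionary between the two settings. A closed subspace $V \subseteq L^2(\textbf{R})$ is shift-dilation invariant if and only if it is $\Gamma$-invariant in the sense of Proposition \ref{mainspace}, and an operator $U$ is shift-dilation preserving if and only if it is $\Gamma$-preserving, because commuting with all $T_k D_\alpha$ is equivalent to commuting with every $T_k$ and every $D_\alpha$ separately. Under this dictionary, the operators $r_\alpha$ and $\Pi(\alpha)$ introduced just before the corollary coincide with the objects $r_g$ and $\Pi(g)$ from \eqref{r} and \eqref{P}, and the compatibility identity $\mathcal{T}(T_k D_\alpha f)(\omega) = \langle \omega, k \rangle r_\alpha \mathcal{T}f(\alpha^* \omega)$ continues to hold by the same argument that yielded it for $R_g$.

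Finally, invoking Theorem \ref{Main} in this specialization yields precisely the stated condition $\mathcal{R}(\alpha^{*} \omega) = r_{\alpha^{-1}} \mathcal{R}(\omega) r_\alpha$. The proof is essentially a translation of notation, and the only point requiring attention is to confirm that each structural ingredient used in the proof of Theorem \ref{Main} — the semidirect product law, the unitarity of $T_k D_\alpha$, and the intertwining identity \eqref{Prg} — goes through verbatim with $D_\alpha$ in place of $R_g$. None of these presents a genuine obstacle.
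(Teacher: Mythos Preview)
Your proposal is correct and matches the paper's approach exactly: the paper treats this corollary as an immediate specialization of Theorem~\ref{Main} to the case $G = Aut(\textbf{R})$ with $R_g = D_\alpha$, after setting up the dictionary $r_\alpha$, $\Pi(\alpha)$ in the preceding paragraph, and gives no further argument. If anything, you have spelled out more of the verification than the paper does.
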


%\section{Appendix}
%In this section we prove the result of the Theorem \ref{t3.6} in an abstract version.

%\begin{proposition}
%\end{proposition}

\bibliographystyle{amsplain}

\begin{thebibliography}{10}

\bibitem{BCHM} D. Barbieri, C. Cabrelli, E. Hern´andez, U. Molter, \textit{approximation by group invariant subspaces}, preprint.

\bibitem{BHP} D. Barbieri, E. Hern´andez, V. Paternostro, \textit{The Zak transform and the structure of spaces invariant by the action of an LCA group}, J. Funct. Anal., \textbf{269(5)} (2015), 1327--1358.

\bibitem{BDR} C. de Boor, R.A. DeVore, and A. Ron, \textit{The structure of
finitely generated shift invariant spaces in $L^2(\mathbb{R}^d)$},
J. Funct. Anal. \textbf{119}  (1994), 37--78.

\bibitem{B} M. Bownik, \textit{The structure of shift invariant subspaces of
$L^2(\mathbb{R}^n)$}, J. Funct. Anal., \textbf{177(2)} (2000), 282--309.

\bibitem{Cab} C. Cabrelli, V. Paternostro, \textit{Shift-invariant spaces on LCA groups}, J. Funct. Anal., \textbf{258} (2010), 2034--2059.






%\bibitem{BR} M. Bownik, K. Ross, \textit{The structure of translation-invariant spaces on locally compact abelian groups.}, J. Fourier Anal. Appl., \textbf{21(4)} (2015), 849--884.




\bibitem {F} G. B. Folland, \textit{A Course in Abstract Harmonic Analysis}, CRC
Press, 1995.

\bibitem {Hel} H. Helson, \textit{Lectures on Invariant Subspaces},  Academic Press, New York, 1964.

%
%\bibitem {HelS} H. Helson, \textit{The Spectral Theorem}, Lecture Notes in Mathematics, Springer-Verlag, New York, 1986.


\bibitem {H} E. Hewitt, K. A. Ross, \textit{Abstract Harmonic Analysis}, Vol 1,
Springer-Verlag, 1963.



%\bibitem{Jak} M.S. Jakobsen, J. Lemvig,  \textit{Reproducing formulas for generalized translation invariant systems on locally compact abelian groups.}, Amer. Math. Soc.\textbf{368} (2016), 464--506.


\bibitem{KRr} R. A. Kamyabi Gol, R. Raisi Tousi, \textit{A range function approach to shift-invariant spaces on locally compact abelian groups.}, Int. J. Wavelets Multiresolut. Inf. Process., \textbf{8} (2010), 49--59.

\bibitem{KRs} R. A. Kamyabi Gol, R. Raisi Tousi, \textit{The structure of shift-invariant spaces on a locally compact abelian group.}, J. Math. Anal. Appl., \textbf{340} (2008), 219--225.


\bibitem{KRsh} R. A. Kamyabi Gol, R. Raisi Tousi, \textit{Shift preserving operators on locally compact abelian groups}, Taiwanese J. Math., \textbf{340} (2010), 219--225.





%\bibitem {M} G. J. Murphy, \textit{$C^*$ Algebras and Operator Theory}, Academic Press, Boston, 1990.
%
\bibitem{RS} A. Ron, Z. Shen,  \textit{Frames and stable bases for shift-invariant subspaces of $L^2(\mathbb{R}^d)$}, Can. J. Math, \textbf{47} (1995), 1051--1094.

%\bibitem {Rudin} W. Rudin, \textit{Fourier Analysis on Groups},   Interscience Publishers, USA, 1960.
%
%%\bibitem {Scha} R. Schatten, \textit{Norm Ideals of Completely Continuous Operators},  Springer-Verlag, Berlin (1970).
%
%\bibitem {Zhu} K. Zhu, \textit{Operator Theory in Function Spaces},  No. 138. American Mathematical Society, Providence, 2007.





\end{thebibliography}

\end{document}